\def\smallunderbrace#1{\mathop{\vtop{\m@th\ialign{##\crcr
   $\hfil\displaystyle{#1}\hfil$\crcr
   \noalign{\kern3\p@\nointerlineskip}%
   \tiny\upbracefill\crcr\noalign{\kern3\p@}}}}\limits}
\newcommand{\clap}[1]{\makebox[0pt]{#1}}
\tikzstyle{ball} = [circle,shading=ball, ball color=black,
\newtheorem{theorem}{Theorem}
\newtheorem{cor}[theorem]{Corollary}
\theoremstyle{definition}
\newtheorem{example}[theorem]{Example}
\newtheorem{remark}[theorem]{Remark}
\newcommand{\ones}{\mathbf{1}}
\newcommand{\zeros}{\mathbf{0}}
\newcommand{\Z}{\mathbb{Z}}
\newcommand{\tL}{\widetilde{L}}
\DeclareMathOperator{\im}{\mathrm{im}}
\DeclareMathOperator{\diag}{\mathrm{diag}}
\DeclareMathOperator{\crit}{\mathcal{K}}
\DeclareMathOperator{\cok}{\mathrm{cok}}
\title{critical groups of iterated cones}
\author{Gopal Goel}
\address{Portland, OR}
\email{gopal.krishna.goel@gmail.com}
\author{David Perkinson}
\address{Reed College, Portland, OR}
\email{davidp@reed.edu}
\subjclass[2010]{primary 05C25, secondary 05C76}
\keywords{graph Laplacian, critical group, Abelian sandpile, cone over a graph}
\begin{document}

\begin{abstract} 
  Let~$G$ be a finite graph, and let~$G_n$ be the~$n$-th iterated cone over~$G$.
  We study the structure of the critical group of~$G_n$ arising in divisor
  and sandpile theory.
\end{abstract}

\maketitle

\section{Introduction} The critical group~$\crit(G)$ of a connected graph~$G$ is
the torsion part of the cokernel of its discrete Laplacian (details appear
  below).  It is known as the degree-zero part of
the Picard group or as the Jacobian of~$G$ in the divisor theory of
graphs~(\cite{Baker}).  It is isomorphic to the sandpile group
of~$G$ from statistical physics~(\cite{Dhar}) and to the group of parking
functions of~$G$ from combinatorics~(\cite{Postnikov}).  The {\em $n$-th
iterated cone} over~$G$, denoted~$G_n$, is the join of~$G$ and the complete graph
on~$n$-vertices,~$K_n$, formed by connecting each vertex of~$G$ with each vertex
of~$K_n$ by an undirected edge.  Our main result is Theorem~1, which provides a
description of the structure of~$\crit(G_n)$ as an abelian group.

The question of the structure of~$\crit(G_n)$ was addressed previously
in~\cite{Brown}.  In that paper, Theorem~A provides a short exact sequence
for~$\crit(G_n)$ (cf.~our Corollary~\ref{Theorem A}) and Corollary~B computes the
order of~$\crit(G)$ in terms of the characteristic polynomial of the Laplacian
of~$G$ (cf.~our Theorem~\ref{main}~(\ref{part3})). We give new short and direct
proofs of both of these results. We also give a partial answer to Question~1.2
of~\cite{Brown}, which asks when the short exact sequence splits.  (See the
discussion after Corollary~\ref{Theorem A}, below.) 
\medskip

\noindent{\bf Acknowledgements.} We are grateful to David Zureick-Brown for
presenting the problem of determining the structure of~$\crit(G_n)$ to us.
We thank Collin Perkinson for comments on the exposition. We also thank our
anonymous referee for helpful suggestions.

\section{Main Results}
Let~$G$ be an Eulerian digraph. As a special case,~$G$ could be an undirected
graph.  Loops and multiple edges are allowed.  We assume that~$G$ is connected
with finite vertex set~$V$ and finite edge multiset~$E$. We write~$(v,w)$ for a
directed edge starting at~$v$ and ending at~$w$. The main Eulerian property we
need is that the indegree and outdegree are equal at each vertex.
Letting~$\Z V$ denote the free abelian group on the vertices, the
(discrete) {\em Laplacian} of~$G$ is the homomorphism~$L\colon\Z V\to\Z V$ determined
by~$L(v)=\mathrm{outdeg}(v)\,v-\sum_{(v,w)\in E}w$ for each $v\in V$.  We
assume the vertices are ordered so that we can identify~$L$ with a~$k\times k$
matrix where~$k:=|V|$.  Then~$L=D-A^t$ where~$D$ is the diagonal matrix of the
outdegrees of the vertices and~$A^t$ is the transpose of the directed adjacency
matrix of~$G$.  The~$i,j$-th entry of~$A$ is the number of edges from the~$i$-th
vertex to the~$j$-th vertex.  The image of~$L$ lies in
the kernel of the ``degree'' homomorphism~$\delta\colon\Z V\to \Z V$ determined  
by~$\delta(v)=1$ for each~$v\in V$.  The {\em critical group} of~$G$ is
\[
  \crit(G):=\ker\delta/\im L.
\]
Fixing any vertex~$u\in V$, there is an isomorphism
\begin{align*}
  \cok(L)&\to\crit(G)\oplus\Z\\
  f&\mapsto (f-\delta(f)\chi_{u},\delta(f)),
\end{align*}
where~$\cok(L)$ is the cokernel of~$L$ and~$\chi_{u}\in\Z^V$ is the indicator function for~$u$.
It is well-known (e.g., via the matrix-tree theorem~(\cite[Thm.~5.6.8]{Stanley})) that since~$G$ is
connected, the rank of~$L$ is~$k-1$, and hence~$\crit(G)$ is finite.
Deleting the row and column corresponding to~$u$ from the matrix~$L$ gives the
{\em reduced Laplacian}~$\tL$ of~$G$, and since~$G$ is Eulerian
(\cite[Theorem~12.1]{Corry}), there is an
isomorphism
\[
  \crit(G)\simeq \cok(\tL)
\]
over~$\Z$.

\begin{theorem}\label{main} Let~$G$ be an Eulerian digraph with~$k$ vertices and Laplacian ~$L$.
  Let~$G_n$ be the~$n$-th cone over~$G$ where~$n\geq 2$.
  \begin{enumerate}[font=\upshape,itemsep=1em]
    \item\label{part1} Let $\ones$ be the~$k\times k$ matrix whose entries are
      all~$1$, and let~$I_k$ be the~$k\times k$ identity matrix.  Then
      \[
	\crit(G_n)\simeq \left( \Z/(n+k)\Z
	\right)^{n-2}\oplus\cok(nI_k+L+\ones).
      \]
    \item\label{part2} The group $\cok(nI_k+L+\ones)$ has a subgroup isomorphic to~$\Z/(n+k)\Z$.
    \item\label{part3} {\rm (\cite[Corollary B]{Brown})} The order of the critical group of~$G_n$ is
      \[
	|\crit(G_n)| =\frac{|p_L(-n)|}{n} (n+k)^{n-1}
      \]
      where~$p_L$ is the characteristic polynomial of~$L$.
  \end{enumerate}
\end{theorem}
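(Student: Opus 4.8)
The plan is to compute the cokernel of $L(G_n)$, which is isomorphic to $\crit(G_n)\oplus\Z$ by the isomorphism recalled above, and to read all three parts off it. Write $N=n+k$, let $V$ be the vertex set of $G$, let $c_1,\dots,c_n$ be the cone vertices, and set $s_v=\sum_{v\in V}v$ and $s_c=\sum_{j=1}^{n}c_j$ in $\Z V(G_n)$. Reading the columns of $L(G_n)$ off the join structure, $\cok(L(G_n))$ is presented by generators $V\cup\{c_1,\dots,c_n\}$ and relations
\[
  L(v)+nv=s_c\quad(v\in V),\qquad Nc_j=s_v+s_c\quad(1\le j\le n),
\]
where $L(v)\in\Z V$ denotes the image of $v$ under the Laplacian of $G$. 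Two facts about $L$ will be used: every Laplacian has all column sums zero, and the Eulerian hypothesis is equivalent to $L$ having all row sums zero, i.e.\ $\sum_{v\in V}L(v)=\zeros$.

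For part~(\ref{part1}) I first make unimodular changes of generators on the cone side. Substituting $d_j:=c_j-c_1$ for $2\le j\le n$ replaces the relations $Nc_j=s_v+s_c$ by the single relation $Nc_1=s_v+s_c$ together with $Nd_j=0$ $(2\le j\le n)$, and then substituting $t:=d_2+\dots+d_n$ for $d_n$ turns $Nd_n=0$ into $Nt=0$. Since now $s_c=nc_1+t$, the generators $d_2,\dots,d_{n-1}$ occur only in the relations $Nd_j=0$ and peel off as a summand $(\Z/N\Z)^{n-2}$; the complement $H$ is generated by $V\cup\{c_1,t\}$ with relations $L(v)+nv=nc_1+t$, $kc_1=s_v+t$, and $Nt=0$. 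Eliminating $t=kc_1-s_v$ reduces these to $L(v)+nv+s_v=Nc_1$ $(v\in V)$, the relation $Nt=0$ becoming redundant once one sums the relations $L(v)+nv+s_v=Nc_1$ over $v\in V$ and uses $\sum_{v\in V}L(v)=\zeros$. Thus $H=\langle\,V\cup\{c_1\}\mid L(v)+nv+s_v=Nc_1\ (v\in V)\,\rangle$. The coordinate-sum homomorphism kills every relation --- here one uses that $L$ has all column sums zero, so that $\deg\bigl(L(v)+nv+s_v-Nc_1\bigr)=n+k-N=0$ --- and is surjective onto $\Z$, so writing $\deg_H$ for the induced surjection we get $H\cong\Z\oplus\ker(\deg_H)$; moreover $\{\,v-c_1:v\in V\,\}$ is a $\Z$-basis of the degree-zero sublattice of $\Z(V\cup\{c_1\})$, and in that basis the relation $L(v)+nv+s_v-Nc_1$ equals $\sum_{w\in V}(nI_k+L+\ones)_{wv}\,(w-c_1)$ --- this is where the fact that every column of $nI_k+L+\ones$ sums to $N$ enters --- so $\ker(\deg_H)\cong\cok(nI_k+L+\ones)$. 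Assembling, $\cok(L(G_n))\cong(\Z/N\Z)^{n-2}\oplus\Z\oplus\cok(nI_k+L+\ones)$; comparing this with $\cok(L(G_n))\cong\crit(G_n)\oplus\Z$ and using that $\crit(G_n)$ is finite shows first (comparing free ranks) that $\cok(nI_k+L+\ones)$ is finite, and then (comparing torsion subgroups) that $\crit(G_n)\cong(\Z/N\Z)^{n-2}\oplus\cok(nI_k+L+\ones)$, which is~(\ref{part1}).

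Parts~(\ref{part2}) and~(\ref{part3}) are then short. For~(\ref{part2}), in the presentation $\cok(nI_k+L+\ones)=\langle\,V\mid L(v)+nv+s_v=0\ (v\in V)\,\rangle$ the map sending every $v$ to $1$ is a well-defined homomorphism onto $\Z/N\Z$ --- well-defined because $L$ has all column sums zero, so each relation maps to $n+k\equiv0\pmod N$ --- and a finite abelian group having $\Z/N\Z$ as a quotient also has it as a subgroup. For~(\ref{part3}), part~(\ref{part1}) gives $|\crit(G_n)|=N^{n-2}\,|\det(nI_k+L+\ones)|$; the matrix-determinant lemma, together with the fact that the all-ones vector is an eigenvector of $nI_k+L$ of eigenvalue $n$, gives
\[
  \det(nI_k+L+\ones)=\Bigl(1+\tfrac{k}{n}\Bigr)\det(nI_k+L)=\tfrac{n+k}{n}(-1)^k p_L(-n),
\]
using $\det(nI_k+L)=(-1)^k\det(-nI_k-L)=(-1)^k p_L(-n)$. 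Hence $|\crit(G_n)|=N^{n-2}\cdot\tfrac{N}{n}\,|p_L(-n)|=\tfrac{|p_L(-n)|}{n}(n+k)^{n-1}$.

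The computation is elementary; the step needing care is the bookkeeping of the two cone-side substitutions together with the check that the $(\Z/N\Z)^{n-2}$ summand genuinely decouples, and then the identification of the leftover group $H$, for which the choice of sublattice basis $\{\,v-c_1:v\in V\,\}$ is essential in order to recognise the non-square presentation of $\ker(\deg_H)$ as $\cok(nI_k+L+\ones)$. The Eulerian hypothesis $\sum_{v\in V}L(v)=\zeros$ is invoked at exactly one point in part~(\ref{part1}) and once more in the determinant computation, while the automatic vanishing of the column sums of $L$ is what makes both the degree map and the $\Z/N\Z$-quotient well defined.
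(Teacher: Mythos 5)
Your proof is correct, but it travels a somewhat different road than the paper's. For Part~(\ref{part1}) the paper works with the \emph{reduced} Laplacian $\tL_n$ of $G_n$ (deleting one cone vertex) and performs three explicit unimodular row/column operations to reach the block-diagonal matrix with blocks $nI_k+L+\ones$, $(n+k)I_{n-2}$, and $1$; you instead work with the full Laplacian of $G_n$ as a presentation of $\cok(L(G_n))$, make the analogous unimodular changes of generators on the cone side, and then must do the extra bookkeeping of splitting off the free $\Z$ via the degree map and identifying $\ker(\deg_H)$ with $\cok(nI_k+L+\ones)$ through the basis $\{v-c_1\}$ --- in effect you re-derive the reduction to the reduced Laplacian by hand, at the cost of a slightly longer argument but without invoking the isomorphism $\crit(G_n)\simeq\cok(\tL_n)$ for Eulerian graphs. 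Your checks of where the Eulerian hypothesis (row sums of $L$ vanish) versus the automatic column-sum condition enter are accurate and match the paper's use of ``all row and column sums of $L$ are $0$.'' For Part~(\ref{part2}) the paper exhibits the subgroup directly: the class of $\vec 1$ in $\cok(nI_k+L+\ones)$ has order exactly $n+k$, using $A\vec 1=(n+k)\vec 1$ and full rank; you instead produce a surjection onto $\Z/(n+k)\Z$ (the coordinate-sum map) and appeal to duality of finite abelian groups to convert the quotient into a subgroup --- valid, marginally less explicit. For Part~(\ref{part3}) the paper expands $\det(nI_k+L+\ones)$ by multilinearity in the rows, while you use the matrix-determinant lemma with $\vec 1$ an eigenvector of $nI_k+L$ of eigenvalue $n$; these are equivalent, and your sign bookkeeping via $\det(nI_k+L)=(-1)^kp_L(-n)$ is if anything more careful than the paper's. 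One small point: the $(1+\vec 1^t M^{-1}\vec 1)$ form of the lemma presupposes $nI_k+L$ invertible; this is true (every eigenvalue of a digraph Laplacian has nonnegative real part, e.g.\ by Gershgorin, so $-n$ is not an eigenvalue), but you should either say so or use the adjugate form $\det(M+\vec 1\vec 1^t)=\det M+\vec 1^t\mathrm{adj}(M)\vec 1$, which with $M\vec 1=n\vec 1$ gives the same identity with no invertibility hypothesis.
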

\begin{proof} Order the vertices of~$G_n$ so that the cone vertices appear at
  the end.  The reduced Laplacian for~$G_n$ is then, in block form,
\[
  \tL_n = 
\left[
\renewcommand*{\arraystretch}{1.4}
\begin{array}{c|c}
    nI_k+L & -\ones \\
    \hline
    -\ones & (n+k)I_{n-1}-\ones
\end{array}
\right],
\]
where each~$\ones$ denotes a matrix of~$1$s (with dimensions inferred from
context).  Since~$G$ is Eulerian, all row and column sums of~$L$ are~$0$.
Perform the following operations in order on $\tL_n$:
\begin{enumerate}
  \item Subtract the last column from all other columns.
    \item Add all but the last row to the last row.
    \item Add the last row to all rows but the last.
\end{enumerate}
The result is the block matrix
\begin{equation}\label{diagonal}
  M:=\left[
\renewcommand*{\arraystretch}{1.4}
\begin{array}{c|c|c}
    nI_k + L + \ones & \zeros & \zeros \\
    \hline
    \zeros & (n+k)I_{n-2} & \zeros \\
    \hline
    \zeros & \zeros & 1
\end{array}
\right].
\end{equation}
Then~$\cok(M)\simeq\cok(\tL_n)\simeq\crit(G_n)$, and Part~\ref{part1} follows.

For Part~\ref{part2}, first note that since~$G_n$ is connected,~$\tL_n$ has full
rank, and hence so does~$A:=nI_k+L+\ones$. Consider the
homomorphism~$\phi\colon\Z\to\cok(A)$ sending~$1$ to the all-ones
vector~$\vec{1}$. If~$\ell\in\ker\phi$, then there is a vector~$\vec{v}\in\Z^k$
such that~$A\vec{v}=\ell\cdot\vec{1}$.  However,~$A\vec{1}=(n+k)\cdot\vec{1}$.
Since~$A$ has full rank and~$\vec{v}$ is an integer vector, it follows
that~$n+k$ divides~$\ell$ and~$\vec{v}$ is a constant vector.  Hence,~$\ker\phi$ is generated by~$n+k$.

Finally, for Part~\ref{part3}, note that~$|\crit(G_n)|=\det(M)=(n+k)^{n-2}|\det(nI_k+L+\ones)|$.
Let~$r_1,\dots,r_k$ be the rows of~$nI_k+L$.
For each~$i=1,\dots,k$, we 
use the identity~$r_1+\dots+r_k=n\vec{1}$ to substitute for~$r_i$ and use the
fact that the determinant is an alternating multilinear
function of the rows of a matrix to get 
\[
  p_L(-n)=\det(nI_k+L)=\det(r_1,\dots,r_k)=n\det(r_1,\dots,\smallunderbrace{\vec{1}}_{\clap{$i$}},\dots,r_k),
\]
where~$\vec{1}$ appears in the~$i$-th component.  Then
\begin{eqnarray*}
  \det(nI_k+L+\ones)&=&\det(r_1+\vec{1},\dots,r_k+\vec{1})\\
  &=&\det(r_1,\dots,r_k)+\sum_{i=1}^k\det(r_1,\dots,\smallunderbrace{\vec{1}}_{\clap{$i$}},\dots,r_k)\\
  &=&(n+k)\frac{p_L(-n)}{n}.
\end{eqnarray*}
The result follows.
\end{proof}
\begin{remark} To see that Corollary~B of~\cite{Brown} is equivalent to the
  result stated in Part~\ref{part3}, note that in~\cite{Brown}, the orders
  of~$H_n$ and~$\crit(G_n)$ are stated in terms of the characteristic polynomial
  of the endomorphism of~$\ker\delta$ obtained from our Laplacian by
  restriction.  Calling that characteristic polynomial~$P_G$, we
  have~$P_G(x)=p_L(x)/x$.
\end{remark}
\begin{remark} Part~\ref{part3} of the theorem also holds in the case~$n=1$,
  i.e., for the (first) cone over~$G$. The reduced Laplacian of~$G_1$
  is~$I_k+L$.  Therefore,~$\crit(G_1)\simeq\cok(I_k+L)$,
  and $|\crit(G_1)|=|\det(I_k+L)|=|p_L(-1)|$.
\end{remark}
As an immediate corollary of Theorem~\ref{main}, we have the following:
\begin{cor}\label{Theorem A} {\rm (\cite[Theorem A]{Brown})}  There is an exact sequence,
\[
  0\to\left(\Z/(n+k)\Z\right)^{n-1}\to \crit(G_n)\to H_n\to0       
\]
where~$H_n$ is a group of order~$|p_L(-n)|/n$.
\end{cor}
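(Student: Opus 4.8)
The plan is to read the corollary off directly from Theorem~\ref{main}. Write $A := nI_k + L + \ones$, as in the proof of that theorem. By Part~\ref{part1} there is an isomorphism $\crit(G_n)\simeq\left(\Z/(n+k)\Z\right)^{n-2}\oplus\cok(A)$, so the whole problem reduces to producing, inside $\cok(A)$, a subgroup isomorphic to $\Z/(n+k)\Z$ with a controllable quotient.

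That subgroup is exactly the one furnished by Part~\ref{part2}: the homomorphism $\phi\colon\Z\to\cok(A)$ sending $1\mapsto[\vec 1]$ has kernel $(n+k)\Z$, so $C:=\im\phi$ is a subgroup of $\cok(A)$ with $C\simeq\Z/(n+k)\Z$. Define $H_n:=\cok(A)/C$, which gives a short exact sequence $0\to C\to\cok(A)\to H_n\to 0$. First I would take the direct sum of this sequence with the identity on $\left(\Z/(n+k)\Z\right)^{n-2}$ and transport it across the isomorphism of Part~\ref{part1}; this exhibits $\left(\Z/(n+k)\Z\right)^{n-2}\oplus C\simeq\left(\Z/(n+k)\Z\right)^{n-1}$ as a subgroup of $\crit(G_n)$ with quotient $H_n$, which is precisely the asserted sequence.

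Then I would compute the order of $H_n$. Since $G_n$ is connected, $A$ has full rank (Part~\ref{part1}), so $\cok(A)$ is finite of order $|\det A|$ and $|H_n|=|\det A|/(n+k)$. The determinant of $A=nI_k+L+\ones$ was already evaluated in the proof of Part~\ref{part3}, where it is shown to equal $(n+k)\,p_L(-n)/n$. Hence $|H_n|=|p_L(-n)|/n$, as claimed.

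I do not expect any real obstacle here: the corollary is pure bookkeeping on top of Theorem~\ref{main}. The only point that calls for a moment's attention is checking that the copy of $\Z/(n+k)\Z$ coming from Part~\ref{part2} genuinely sits inside $\cok(A)$ as a subgroup rather than merely as a subquotient, and this is exactly the content of the identity $\ker\phi=(n+k)\Z$ established there.
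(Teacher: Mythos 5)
Your proposal is correct and matches the paper's intent exactly: the paper states this corollary as immediate from Theorem~\ref{main}, and your argument is just the explicit bookkeeping---Part~(\ref{part1}) for the splitting, Part~(\ref{part2}) for the copy of $\Z/(n+k)\Z$ inside $\cok(nI_k+L+\ones)$, and the determinant identity $\det(nI_k+L+\ones)=(n+k)p_L(-n)/n$ from the proof of Part~(\ref{part3}) for the order of $H_n$. The only nitpick is a citation slip: the full-rank claim for $nI_k+L+\ones$ is established in the proof of Part~(\ref{part2}), not Part~(\ref{part1}).
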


Question~1.2 of~\cite{Brown} asks when the exact sequence in Corollary~\ref{Theorem A}
splits.  By Theorem~1, $\left( \Z/(n+k)\Z\right)^{n-2}$ always splits off
of~$\crit(G_n)$, and the exact sequence of Theorem~A splits exactly when~$\Z/(n+k)\Z$ is a direct
summand of~$\cok(n I_k+L+\ones)$.  The latter will depend, for instance, on comparing
the prime factorization of~$n+k$ to the primary decomposition of the abelian
group~$\cok(n I_k+L+\ones)$ (cf.~Example~\ref{example1}).  It would be interesting if much more could be said
in answer to the question for arbitrary~$G$.

\begin{example}\label{example1}
  Let~$G=$ \begin{tikzpicture}\node[ball] (1) at
  (0,0) {};\node[ball] (2) at (0.5,0) {};\node[ball] (3) at (1,0)
  {};\node[ball](4) at (1.5,0) {};\draw (1)--(2)--(3)--(4);\end{tikzpicture}
  \ be the path graph on~$4$ vertices.  For this example, we 
  compute~$\crit(G_n)$ for all~$n$ and show that the exact sequence in
  Corollary~\ref{Theorem A} splits if and only if~$n$ is odd. We have
\[
  M_n:=nI_4+L+\ones=
  \left(\begin{array}{cccc}
      n+2&0&1&1\\
      0&n+3&0&1\\
      1&0&n+3&0\\
      1&1&0&n+2
  \end{array} \right).
\]
If $\diag(d_1,d_2,d_3,d_4)$ is the Smith normal form for~$M_n$, then $\cok
M_n\simeq\prod_{i=1}^4\Z/d_i\Z$.  Each~$d_i$ may be calculated as the~$\gcd$ of
the~$i\times i$ minors of~$M_n$, and~$d_i|d_{i+1}$ for~$i=1,2,3$.  Deleting 
the second row and third column from~$M_n$ produces a~$3\times 3$ submatrix matrix
with determinant~$1$.  Hence~$d_3=1$, which forces~$d_1=d_2=1$. So~$M_n$ is a
cyclic group of order $\det(M_n)$. By Theorem~\ref{main}~(\ref{part1}),
\[
  \crit(G_n)\simeq \left(\Z/(n+4)\Z\right)^{n-2}\oplus\Z/\det(M_n)\Z.
\]
Now
\[
  \det(M_n) = (n^2+4n+2)(n+4)(n+2),
\]
and thus~$\Z/\det(M_n)\Z$ contains~$\Z/(n+4)\Z$ as a direct summand if and only
if~$n+4$ is relatively prime to~$(n^2+4n+2)(n+2)$. An easy calculation shows
that
$\gcd((n^2+4n+2)(n+2),n+4)=\gcd(n,4)$, and hence~$\left(\Z/(n+4)\Z\right)^{n-1}$
is a direct summand of~$\crit(G_n)$ if and only if~$n$ is odd.

\end{example}
\bibliographystyle{amsplain}
\bibliography{cone}
\end{document}